 \newtheorem{Theorem}{Theorem}[section]
 \newtheorem{Lemma}[Theorem]{Lemma}
 \newtheorem{Proposition}[Theorem]{Proposition}
 \newtheorem{Definition}[Theorem]{Definition}
 \def\qed{\ifhmode\textqed\fi
       \ifmmode\ifinner\quad\qedsymbol\else\dispqed\fi\fi}
 \def\textqed{\unskip\nobreak\penalty50
        \hskip2em\hbox{}\nobreak\hfill\qedsymbol
        \parfillskip=0pt \finalhyphendemerits=0}
 \def\dispqed{\rlap{\qquad\qedsymbol}}
\def\ZZ{\mathbb{Z}}
\def\m{\mathfrak{m}}
\def\height{\textup{height}}
\def\Ass{\textup{Ass}}
\def\depth{\textup{depth\,}}
\def\supp{\textup{supp}}
\def\G{\mathcal{G}}
\begin{document}

\title{Edge ideals whose all matching powers\\ are bi-Cohen-Macaulay}
\author{Marilena Crupi, Antonino Ficarra}

\address{Marilena Crupi, Department of mathematics and computer sciences, physics and earth sciences, University of Messina, Viale Ferdinando Stagno d'Alcontres 31, 98166 Messina, Italy}
\email{mcrupi@unime.it}

\address{Antonino Ficarra, BCAM -- Basque Center for Applied Mathematics, Mazarredo 14, 48009 Bilbao, Basque Country -- Spain, Ikerbasque, Basque Foundation for Science, Plaza Euskadi 5, 48009 Bilbao, Basque Country -- Spain}
\email{aficarra@bcamath.org,\,\,\,\,\,\,\,\,\,\,\,\,\,antficarra@unime.it}

\subjclass[2020]{Primary 13C05, 13C14, 13C15; Secondary 05E40}
\keywords{Bi-Cohen–Macaulay ideals, Complete graphs, Paths, Generic graphs, Squarefree powers, Matching powers.}

\begin{abstract}
We classify all graphs $G$ satisfying the property that all matching powers $I(G)^{[k]}$ of the edge ideal $I(G)$ are bi-Cohen-Macaulay for $1\le k\le\nu(G)$, where $\nu(G)$ is the maximum size of a matching of $G$.
\end{abstract}

\maketitle
\section*{Introduction}

In \cite{FV2005}, Fl\o ystad and Vatne  introduced the concept of bi-Cohen-Macaulay simplicial complex.  A simplicial complex $\Delta$ is called bi-Cohen-Macaulay, if $\Delta$ and its Alexander dual
$\Delta^\vee$ are Cohen-Macaulay. In that paper the authors associated to each simplicial complex $\Delta$, in a natural way, a complex of coherent sheaves and showed that this complex reduces to a coherent sheaf if and only
if $\Delta$ is bi-Cohen-Macaulay. Such a notion has suggested the definition of bi-Cohen-Macaulay squarefree monomial ideal. 

Let $S=K[x_1,\dots,x_n]$ be the standard graded polynomial ring over a field $K$ and let $I\subset S$ be a squarefree monomial ideal. We say that $I$ is Cohen-Macaulay if $S/I$ is a Cohen-Macaulay ring. Recall that $I$ can be considered as the Stanley-Reisner ideal of a simplicial complex on the vertex set $[n]= \{1,\dots,n\}$. Attached to $I$ is the Alexander dual $I^\vee$, which is again a squarefree monomial ideal. We say that $I$ is \textit{bi-Cohen-Macaulay} (bi-CM, for short) if both $I$ and $I^\vee$ are Cohen-Macaulay ideals. By the Eagon-Reiner criterion \cite[Theorem 8.1.9]{HHBook} $I$ has a linear resolution if and only if $I^\vee$ is Cohen-Macaulay. Hence, $I$ is bi-CM if and only if it is Cohen-Macaulay with linear resolution.

Such a notion can be revisited in graph theory. More in detail, let $G$ be a finite simple graph on the vertex set $[n]= \{1,\dots,n\}$ and let $I(G)$ be the edge ideal of $G$, that is, the squarefree monomial ideal of $S$ whose generators are the monomials $x_ix_j$ of the polynomial ring $S=K[x_1,\dots,x_n]$ with $\{i,j\}$ an edge of $G$. We say that $G$ is bi-CM if $I(G)$ is bi-CM. By the Eagon-Reiner criterion, previously mentioned, it follows that a bi-CM graph $G$ is connected. Indeed, if this is not the case, then there exist induced subgraphs $G_1$ and $G_2$ of $G$ such that the vertex set $V (G)$ is the disjoint union of $V(G_1)$ and $V(G_2)$. It follows that $I(G) = I(G_1)+I(G_2)$, and the ideals $I(G_1)$ and $I(G_2)$ are ideals in a different set of variables. Therefore, the free resolution of $S/I(G)$ is obtained as the tensor product of the resolutions of $S/I(G_1)$ and $S/I(G_2)$. Thus $I(G)$ has relations of degree $4$, so that $I(G)$ does not have a linear resolution.

In the last years many authors have tried to classify all bi-CM graphs. The pioneer paper is \cite{HR16}, where the authors gave a complete classification of bi-CM graphs, up to separation, and in the case they are bipartite or chordal. 

In this article we classify the graphs $G$ which satisfy the following property: \textit{all matching powers $I(G)^{[k]}$ of  the edge ideal $I(G)$ of $G$ are bi-CM for all $1\le k\le\nu(G)$, where $\nu(G)$ is the maximum size of a matching of $G$.} 

This question has been inspired by many recent articles in which special classes of graphs, whose matching powers of their edge ideals are Cohen-Macaulay, have been considered.  In \cite{DRS24}, Das, Roy and Saha proved that $I(G)^{[k]}$ is Cohen-Macaulay for all $1 \le k \le  \nu(G)$, if $G$ is a Cohen-Macaulay forest, and recently in \cite{FM}, Ficarra and Moradi, have proved that all matching powers of the edge ideal of a chordal graph $G$ are Cohen-Macaulay if and only if $G$ is either a complete graph or a Cohen-Macaulay forest. Furthermore, they have proved that all matching powers of the edge
ideal of a Cameron-Walker graph $G$ are Cohen-Macaulay if and only if $G$ is a complete graph on $2$ or $3$ vertices.

In the present  paper we prove that all matching powers of the edge ideal of a finite, simple graph $G$ on $n$ non-isolated vertices are bi-CM if and only if $G$ is the complete graph $K_n$ or the complementary graph of a path $P_n$ on $n$ vertices. Our main tool is the notion of \textit{vertex splittable} ideal introduced in \cite[Definition 2.1]{MKA16}.

The paper is structured as follows. Section \ref{sec1} contains some notions and results useful for the development of the topic. We deeply discuss the notion of matching powers of the edge ideal of a graph and its relations with the concept of squarefree powers of a squarefree monomial ideal. The main result in the section is Proposition \ref{Prop:KP} that states, via the notion of principal ${\bf t}$-spread Borel ideal, that all matching powers of the edge ideal of a graph $G$ are Cohen-Macaulay if $G$ is the complete graph $K_n$  or the complementary graph of a path $P_n$ on $n$ vertices, for $n\ge 4$. This result is reversed in Section \ref{sec2}.

Section \ref{sec2} contains our main result (Theorem \ref{thm:main}) that states the classification we are looking for. We prove that if $G$ is a finite, simple graph on $n\ge4$ non-isolated vertices, then $I(G)^{[k]}$ is bi-CM for all $1\le k\le\nu(G)$ if and only either $G=K_n$ or $G\cong P_n^c$, where $P_n^c$ is the complementary graph of a path $P_n$ on $n$ vertices. A key result is Lemma \ref{Lem:notCM} that is a criterion for determine if a squarefree monomial ideal is Cohen-Macaulay.

\section{Auxiliary notions and results}\label{sec1}
In this section we discuss some notions and results useful for the development of the paper.

Throughout the article the graphs $G$ considered will all be finite, simple graphs, that is, they will
have no double edges and no loops. Furthermore, we assume that $G$ has no isolated
vertices. The vertex set of $G$ will be denoted by $V (G)$ and we will assume that $V(G)=[n]=\{1, \ldots, n\}$, unless otherwise stated. The set of edges of $G$ will be denoted by $E(G)$. 

We say that $G$ is the \textit{complete graph} on $n$ vertices if $E(G)=\{\{i,j\}:1\le i<j\le n\}$, whereas, we say that $G$ is a \textit{path} on $n$ vertices if, up to a relabeling, we have $E(G)=\{\{1,2\},\{2,3\},\dots,\{n-1,n\}\}$. As usually, a complete graph on $n$ vertices will be denoted by $K_n$ and a path on $n$ vertices will be denoted by $P_n$.

Recall that the complementary graph of a graph $G$ is the graph $G^c$ with vertex set $V(G^c)=V(G)$ and with edge set $E(G^c)=E(K_n)\setminus E(G)$.

A $k$-\textit{matching} of $G$ is a subset $M$ of $E(G)$ of size $k$ such that $e\cap e'=\emptyset$ for all $e,e'\in M$ with $e\ne e'$. We denote by $V(M)$ the vertex set of $M$, that is, the set $\{i\in V(G):i\in e\ \text{for}\ e\in M\}$.  The \textit{matching number} of $G$, denoted by $\nu(G)$, is the maximum size of a matching of $G$. 

We say that a graph $H$ is a \textit{subgraph of $G$} if $V(H)\subseteq V(G)$ and $E(H)\subseteq E(G)$. A subgraph $H$ of $G$ is said to be an \textit{induced subgraph} if for any two vertices $i, j$ in $H$, $\{i, j\}\in E(H)$ if and only if $\{i, j\}\in E(G)$. If $A$ is a subset of $V(G)$, the \textit{induced subgraph} on $A$ is the graph with vertex set $A$ and the edge set $\{\{i, j\}: \mbox{$i, j \in A$ and $\{i, j\}\in E(G)$}\}$.

Now let $S=K[x_1,\dots,x_n]$ be the standard graded polynomial ring over a field $K$. For a non-empty subset $A$ of $[n]$, we set ${\bf x}_A=\prod_{i\in A}x_i$.  

Let $1\le k\le\nu(G)$. We denote by $I(G)^{[k]}$ the squarefree monomial ideal generated by ${\bf x}_{V(M)}$ for all $k$-matchings $M$ of $G$. We call $I(G)^{[k]}$ the \textit{matching power} of $I(G)$. If $k=1$, then $I(G)^{[1]}$ is the well-known ideal, called  the \textit{edge ideal} of $G$ \cite{RVbook}, and we denote it simply by $I(G)=(x_ix_j : \{i,j\}\in E(G))$.

There is a connection of such a notion with the concept of squarefree power (see, for instance, \cite{BHZ}) of a squarefree monomial ideal of $S$. Let $I\subset S$ be a squarefree monomial ideal and $\G(I)$ be its unique minimal set of monomial generators. The \textit{$k$th squarefree power} of $I$, denoted by $I^{[k]}$, is the ideal generated by the squarefree monomials of $I^k$. Thus $u_1u_2\cdots u_k$, $u_i\in \G(I)$, $i\in[k]$, belongs to $\G(I^{[k]})$ if and only if $u_1,u_2,\dots,u_k$ is a regular sequence. Let $\nu(I)$ be the \textit{monomial grade} of $I$, \emph{i.e.}, the maximum among the lengths of a monomial regular sequence contained in $I$. Then $I^{[k]}\ne(0)$ if and only if $k\le\nu(I)$. Hence, the ideal $I(G)^{[k]}$ is the $k$th squarefree power of $I(G)$ and $\nu(I(G))=\nu(G)$. 

See also \cite{BHZ,CFL3,DRS24,DRS25,EF,EF1,EH2021,EHHM2022a,EHHM2022b,FPack2,FHH2022,FM,KNQ24,SASF2024,SASF2022,SASF2023,SASF2024b} for further studies on squarefree and matching powers.

The following result is a consequence of \cite[Corollary 1.3]{EHHM2022a}.

\begin{Lemma}\label{Lem:res}
	Let $G$ be a graph and let $H$ be an induced subgraph of $G$. If $I(G)^{[k]}$ has linear resolution, then $I(H)^{[k]}$ has linear resolution too.
\end{Lemma}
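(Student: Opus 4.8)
The plan is to realise $I(H)^{[k]}$ as the restriction of $I(G)^{[k]}$ to a subset of the variables and then to quote \cite[Corollary 1.3]{EHHM2022a}. First I would dispose of the trivial case: if $k>\nu(H)$, then $I(H)^{[k]}=(0)$ and there is nothing to prove, so I may assume $1\le k\le\nu(H)$. Put $A=V(H)\subseteq V(G)$. Because $H$ is an \emph{induced} subgraph of $G$, a set $M\subseteq E(H)$ is a $k$-matching of $H$ precisely when $M$ is a $k$-matching of $G$ with $V(M)\subseteq A$. Hence the minimal monomial generators of $I(H)^{[k]}$ are exactly those generators $\mathbf{x}_{V(M)}$ of $I(G)^{[k]}$ whose support is contained in $A$; in other words, $I(H)^{[k]}$ is obtained from $I(G)^{[k]}$ by the usual restriction to the variables $\{x_i : i\in A\}$.

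Next I would record the elementary degree bookkeeping. Every minimal generator of $I(G)^{[k]}$, and likewise every minimal generator of $I(H)^{[k]}$, is a product of the $2k$ distinct variables indexed by the vertices of a $k$-matching; thus both ideals are generated in the single degree $2k$. For an ideal generated in a single degree $d$, having a linear resolution is equivalent to the equality $\operatorname{reg}=d$, and in any case $\operatorname{reg} I(H)^{[k]}\ge 2k$, since the regularity of a nonzero graded ideal is bounded below by the top degree of a minimal generator.

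The argument then closes in one line. By hypothesis $I(G)^{[k]}$ has a linear resolution, so $\operatorname{reg} I(G)^{[k]}=2k$. Applying \cite[Corollary 1.3]{EHHM2022a} to the induced subgraph $H\subseteq G$ — which, in the form needed here, guarantees that passing to an induced subgraph does not increase the regularity of the $k$-th squarefree power, equivalently that the graded Betti numbers of $I(H)^{[k]}$ are bounded above by those of $I(G)^{[k]}$ — gives $\operatorname{reg} I(H)^{[k]}\le 2k$. Together with the reverse inequality $\operatorname{reg} I(H)^{[k]}\ge 2k$ from the previous paragraph, we obtain $\operatorname{reg} I(H)^{[k]}=2k$, that is, $I(H)^{[k]}$ has a linear resolution.

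The only non-formal ingredient is \cite[Corollary 1.3]{EHHM2022a}; everything else is generator-degree bookkeeping together with the identification of $I(H)^{[k]}$ as a variable-restriction of $I(G)^{[k]}$. The single point that needs care is that last identification, which relies essentially on $H$ being an \emph{induced} subgraph (for a general subgraph it can fail), but I do not anticipate a real obstacle here.
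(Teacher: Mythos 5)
Your argument is correct and follows the same route as the paper, which simply states the lemma as a consequence of \cite[Corollary 1.3]{EHHM2022a}; you have merely made explicit the identification of $I(H)^{[k]}$ as the restriction of $I(G)^{[k]}$ to the variables indexed by $V(H)$ (using that $H$ is induced) and the regularity bookkeeping in degree $2k$. No gap.
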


Following \cite[Definition 2.1]{MKA16}, a monomial ideal $I\subset S$ is called \textit{vertex splittable} if it can be obtained by the following recursive procedure.
\begin{enumerate}
	\item[\rm (i)] If $u$ is a monomial and $I=(u)$, $I=0$ or $I=S$, then $I$ is vertex splittable.\smallskip
	\item[\rm (ii)] If there exists a variable $x_i$ and vertex splittable ideals $I_1\subset S$ and $I_2\subset K[x_1,\dots,x_{i-1},x_{i+1},\dots,x_n]$ such that $I=x_iI_1+I_2$, $I_2\subseteq I_1$ and $\mathcal{G}(I)$ is the disjoint union of $\mathcal{G}(x_iI_1)$ and $\mathcal{G}(I_2)$, then $I$ is vertex splittable.
\end{enumerate}
In the case (ii), the decomposition $I=x_iI_1+I_2$ is called a \textit{vertex splitting} of $I$ and $x_i$ is called a \textit{splitting vertex} of $I$.

The following lemma is proved in \cite[Theorem 3.6, Corollary 3.8]{MKA16} (see also \cite[Proposition 3]{CF2024}).

\begin{Lemma}\label{Lem:I(G)PI(H)}
	Let $G$ be a graph on the vertex set $[n]$ and assume that $I(G)$ has linear resolution. Then, up to a relabeling, 
	$$I(G)=x_nP+I(H)$$ is a vertex splitting, where $P= (x_j : x_jx_n\in I(G))$ and $H=G\setminus\{n\}$.
\end{Lemma}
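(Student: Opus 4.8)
The plan is to argue by induction on the number $n$ of vertices of $G$, using Fröberg's theorem to locate the right vertex to peel off. Recall that, by Fröberg's theorem, $I(G)$ has a linear resolution if and only if the complementary graph $G^c$ is chordal (for edge ideals this is a field-independent property). Since $G^c$ is then chordal, it admits a perfect elimination ordering and in particular possesses a \emph{simplicial} vertex, that is, a vertex $v$ whose neighborhood $N_{G^c}(v)$ induces a clique of $G^c$. Relabeling the vertices of $G$, we may assume that $n$ is such a simplicial vertex of $G^c$. I then put $P=(x_j:\{j,n\}\in E(G))$ and $H=G\setminus\{n\}$, and observe that $H^c$ is the induced subgraph of $G^c$ on the vertex set $[n-1]$, hence $H^c$ is again chordal, so that $I(H)$ also has a linear resolution.

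Next I would verify, at the level of minimal generators, the identity $I(G)=x_nP+I(H)$. The minimal generators of $I(G)$ divisible by $x_n$ are exactly the monomials $x_nx_j$ with $\{j,n\}\in E(G)$, that is, the elements of $\mathcal{G}(x_nP)$, while the minimal generators not divisible by $x_n$ are exactly the monomials $x_ax_b$ with $\{a,b\}\in E(H)$, that is, the elements of $\mathcal{G}(I(H))$. Hence $\mathcal{G}(I(G))$ is the disjoint union of $\mathcal{G}(x_nP)$ and $\mathcal{G}(I(H))$, and so $I(G)=x_nP+I(H)$. The crucial point --- and the only place where simpliciality is used --- is the inclusion $I(H)\subseteq P$: given an edge $\{a,b\}$ of $H$, if neither $x_a$ nor $x_b$ lay in $P$ then $\{a,n\},\{b,n\}\notin E(G)$, so $a,b\in N_{G^c}(n)$, and simpliciality of $n$ in $G^c$ would force $\{a,b\}\in E(G^c)$, contradicting $\{a,b\}\in E(G)$; thus $x_a\in P$ or $x_b\in P$, so $x_ax_b\in P$, and therefore $I(H)\subseteq P$.

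It remains to observe that both $P$ and $I(H)$ are vertex splittable. For $P$ this is immediate: if $P=(x_{j_1},\dots,x_{j_r})$ then $P=x_{j_1}S+(x_{j_2},\dots,x_{j_r})$ is a vertex splitting, by an obvious induction on $r$ (the principal part $(x_{j_1})$ is vertex splittable by rule (i), and $(x_{j_2},\dots,x_{j_r})$ by the inductive hypothesis, in one fewer variable). For $I(H)$, since it has a linear resolution and $H$ has fewer vertices than $G$, the inductive hypothesis applies and yields that $I(H)$ is vertex splittable; the base of the induction, where $I(G)$ is $(0)$ or principal, is covered by rule (i). Putting these together shows that $I(G)=x_nP+I(H)$ is a vertex splitting, as claimed. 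I expect the main obstacle to be precisely this first step: peeling off an arbitrary vertex does not in general give $I(H)\subseteq P$, and one genuinely needs Fröberg's theorem together with the existence of a simplicial vertex of $G^c$ to make the correct choice of which vertex to call $n$; everything after that is routine monomial bookkeeping and a short induction.
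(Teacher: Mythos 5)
Your proof is correct and is essentially the standard argument: the paper does not prove this lemma itself but cites \cite[Theorem 3.6, Corollary 3.8]{MKA16} and \cite[Proposition 3]{CF2024}, whose proofs run exactly along your lines, namely Fr\"oberg's theorem to get that $G^c$ is chordal, Dirac's existence of a simplicial vertex of $G^c$ to choose which vertex to call $n$, the simpliciality giving the key inclusion $I(H)\subseteq P$, and induction plus the obvious splittability of the monomial prime $P$ to finish. Your identification of $I(H)\subseteq P$ as the one non-routine point, and your verification of it, are exactly right.
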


Recall that for a monomial $u\in S$, the set $\supp(u)=\{i:\ x_i\ \textup{divides}\ u\}$ is called the \textit{support} of $u$, whereas if $I$ is a monomial ideal of $S$, the set
\[\supp(I)=\bigcup_{u\in\mathcal{G}(I)}\supp(u)\] 
is called the \textit{support} of $I$.

The next result slightly generalizes the characterization of the Cohen-Macaulay vertex splittable ideals proved in \cite[Theorem 2]{CF2024}. The proof is verbatim the same as that of \cite[Theorem 2]{CF2024}, therefore we omit it.

\begin{Theorem}\label{Thm:CF}
	Let $I,I_1,I_2\subset S$ be monomial ideals such that $I_2\subseteq I_1$, $i\notin\supp(I_2)$, $I=x_iI_1+I_2$ and $\mathcal{G}(I)=\mathcal{G}(x_iI_1)\cup\mathcal{G}(I_2)$. Furthermore, we assume that $I=x_iI_1+I_2$ is a Betti splitting. Then, the following statements are equivalent.
	\begin{enumerate}
		\item[\em (a)] $I$ is Cohen-Macaulay.
		\item[\em (b)] $I_1,I_2$ are Cohen-Macaulay and $\depth S/I_1=\depth S/(I_2,x_i)$.
	\end{enumerate} 
\end{Theorem}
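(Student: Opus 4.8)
The plan is to reduce the Cohen--Macaulayness of $S/I$ to two numerical identities --- one for $\pd S/I$ and one for $\dim S/I$, both written in terms of $S/I_1$ and $S/(I_2,x_i)$ --- and then to play them off against each other through the Auslander--Buchsbaum formula. Throughout I set $A=S/I_1$ and $B=S/(I_2,x_i)$, and I use that any finitely generated graded $S$-module $M$ satisfies $\pd M\ge n-\dim M$, with equality exactly when $M$ is Cohen--Macaulay.

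First I would pin down the intersection occurring in the Betti splitting. Using only $I_2\subseteq I_1$ and $i\notin\supp(I_2)$, one checks on monomials that $x_iI_1\cap I_2=x_iI_2$: the inclusion $\supseteq$ is clear, and if a monomial $u=x_iv$ lies in $x_iI_1\cap I_2$, then some minimal generator $w$ of $I_2$ with $x_i\nmid w$ divides $u$, hence $w\mid v$, so $v\in I_2$. Since $I=x_iI_1+I_2$ is assumed to be a Betti splitting and multiplication by $x_i$ merely shifts graded Betti numbers, summing the Betti splitting identity over all internal degrees gives $\pd I=\max\{\pd I_1,\pd I_2+1\}$, that is, $\pd S/I=\max\{\pd S/I_1,\pd S/I_2+1\}$. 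As $i\notin\supp(I_2)$, the variable $x_i$ is a nonzerodivisor on $S/I_2$, so $\pd S/(I_2,x_i)=\pd S/I_2+1$, and therefore $\pd S/I=\max\{\pd A,\pd B\}$, equivalently $\depth S/I=\min\{\depth A,\depth B\}$.

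For the dimension I would pass to zero sets: $V(I)=V(x_iI_1)\cap V(I_2)=\bigl(V(x_i)\cup V(I_1)\bigr)\cap V(I_2)$, and since $I_2\subseteq I_1$ this equals $V((I_2,x_i))\cup V(I_1)$, so $\dim S/I=\max\{\dim A,\dim B\}$. Combining the two formulas, $S/I$ is Cohen--Macaulay if and only if $\pd S/I+\dim S/I=n$, i.e. if and only if $\max\{\pd A,\pd B\}+\max\{\dim A,\dim B\}=n$. A short case analysis on which of the two maxima is attained, using $\pd A\ge n-\dim A$ and $\pd B\ge n-\dim B$, shows that this equality holds precisely when $A$ and $B$ are both Cohen--Macaulay and $\dim A=\dim B$. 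Since $B=S/(I_2,x_i)$ is Cohen--Macaulay exactly when $S/I_2$ is (again because $x_i$ is a nonzerodivisor on $S/I_2$), and $\dim A=\dim B$ is the same as $\depth S/I_1=\depth S/(I_2,x_i)$, this is the desired equivalence of (a) and (b).

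The one place where the argument has real content is where it invokes that $I=x_iI_1+I_2$ is a Betti splitting, so that the graded Betti numbers of $I$ decompose into those of $x_iI_1$, of $I_2$, and of their intersection; but this is a hypothesis, so beyond the routine monomial identity $x_iI_1\cap I_2=x_iI_2$ and the bookkeeping with the two maxima in the last step, I do not anticipate any serious obstacle --- which is consistent with the paper's remark that the proof is verbatim that of \cite[Theorem 2]{CF2024}.
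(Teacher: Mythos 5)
Your argument is correct and follows essentially the same route as the paper's own (i.e., the proof of \cite[Theorem 2]{CF2024} that the authors invoke verbatim): the Betti splitting hypothesis together with the identity $x_iI_1\cap I_2=x_iI_2$ gives $\depth S/I=\min\{\depth S/I_1,\depth S/(I_2,x_i)\}$, the decomposition of $V(I)$ gives $\dim S/I=\max\{\dim S/I_1,\dim S/(I_2,x_i)\}$, and the comparison of the min with the max (equivalently, your bookkeeping with $\pd+\dim=n$) forces exactly condition (b). No gaps.
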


In \cite{F1}, the concept of ${\bf t}$-spread strongly stable ideal was introduced. Let $d\ge2$ and ${\bf t}=(t_1,\dots,t_{d-1})\in\ZZ^{d-1}_{\ge0}$. Let $u=x_{i_1}\cdots x_{i_\ell}\in S$ with $1\le i_1\le\dots\le i_\ell\le n$ and $\ell\le d$. We say that $u$ is ${\bf t}$\textit{-spread} if $i_{j+1}-i_j\ge t_j$ for all $j=1,\dots,\ell-1$.

A monomial ideal $I\subset S$ is called ${\bf t}$\textit{-spread} if $\mathcal{G}(I)$ consists of ${\bf t}$-spread monomials. A ${\bf t}$-spread ideal $I\subset S$ is called \textit{${\bf t}$-spread strongly stable} if for all ${\bf t}$-spread monomials $u\in I$ and all $i<j$ such that $x_j$ divides $u$ and $x_i(u/x_j)$ is ${\bf t}$-spread, then $x_i(u/x_j)\in I$.

Let $u\in S$ be a ${\bf t}$-spread monomial. The smallest ${\bf t}$-spread ideal containing $u$ is called the \textit{principal ${\bf t}$-spread Borel ideal} generated by $u$, and is denoted by $B_{\bf t}(u)$. 

If $u=x_{n-t_{d-1}}x_{n-t_{d-2}-t_{d-1}}\cdots x_{n-t_1-\,\cdots \,- t_{d-1}}$, then $I=B_{\bf t}(u)$ is called the \textit{${\bf t}$-spread Veronese ideal of degree $d$} in $S$, and if $t_1=\dots=t_{d-1}=t$ for some $t$ then $B_{\bf t}(u)$ is called the \textit{uniform $t$-spread Veronese ideal of degree $d$} in $S$.

\begin{Proposition}\label{Prop:KP}
	Let $G\in\{K_n,P_n^c\}$ with $n\ge4$. Then $I(G)^{[k]}$ is bi-CM for all $1\le k\le \nu(G)$. Moreover, $\depth S/I(K_n)=1$ and $\depth S/I(P_n^c)=2$.
\end{Proposition}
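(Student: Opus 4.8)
The plan is to establish the two cases separately, using the machinery of principal ${\bf t}$-spread Borel ideals to identify each matching power with a well-understood ideal whose resolution and depth are known.

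First I would treat $G = K_n$. The key observation is that a $k$-matching of $K_n$ is simply any choice of $k$ disjoint edges on $[n]$, so $\G(I(K_n)^{[k]})$ consists of all squarefree monomials ${\bf x}_A$ with $|A| = 2k$ where the $2k$ vertices are split into $k$ pairs — but every $2k$-subset admits such a splitting, hence $I(K_n)^{[k]}$ is exactly the squarefree Veronese ideal of degree $2k$ in $S$, equivalently the uniform $1$-spread Veronese ideal of degree $2k$. These ideals are known to be vertex splittable (they are ${\bf t}$-spread strongly stable, and such ideals are vertex splittable by the results cited from \cite{MKA16,F1}), hence have linear resolution by \cite{MKA16}; equivalently one can invoke that squarefree Veronese ideals have linear quotients. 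For the depth, squarefree Veronese ideals are Cohen-Macaulay: $S/I(K_n)^{[k]}$ has dimension $2k-1$, and since the ideal has linear resolution of degree $2k$ with $\pd S/I(K_n)^{[k]} = n - (2k-1)$ (computable from the Eagon–Northcott-type resolution, or from the known Betti numbers of Veronese ideals), the Auslander–Buchsbaum formula gives $\depth S/I(K_n)^{[k]} = 2k-1$. In particular $\depth S/I(K_n) = 1$. So $I(K_n)^{[k]}$ is Cohen-Macaulay with linear resolution, i.e. bi-CM, for all $1 \le k \le \nu(K_n) = \lfloor n/2 \rfloor$.

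Next I would treat $G = P_n^c$. Here the combinatorial heart is to describe the $k$-matchings of $P_n^c$: an edge of $P_n^c$ is a pair $\{i,j\}$ with $|i-j| \ge 2$, and I would show (labelling the path $1 - 2 - \cdots - n$) that $\G(I(P_n^c)^{[k]})$ is precisely the set of ${\bf t}$-spread monomials of degree $2k$ for a suitable ${\bf t} = (t_1,\dots,t_{2k-1})$ with the odd-indexed gaps equal to $2$ and the even-indexed gaps equal to $1$ — the pattern forcing consecutive chosen vertices within a matched pair to differ by at least $2$ and consecutive pairs to be disjoint. In fact I expect $I(P_n^c)^{[k]}$ to be exactly the principal ${\bf t}$-spread Borel ideal $B_{\bf t}(u)$ generated by its largest generator $u = x_{n-1}x_n \cdot (\text{shifted copies})$, i.e. a ${\bf t}$-spread Veronese ideal. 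Once this identification is in place, \cite{F1} (and the vertex splittability of ${\bf t}$-spread strongly stable ideals, cf. \cite{MKA16,CF2024}) yields that $I(P_n^c)^{[k]}$ has linear resolution. For Cohen-Macaulayness and the depth computation I would argue by induction on $n$ via the vertex splitting of Lemma \ref{Lem:I(G)PI(H)}: writing $I(P_n^c) = x_n P + I(H)$ with $H = P_n^c \setminus \{n\}$, identify $H$ and the "link" ideal, check the Betti splitting hypothesis, and apply Theorem \ref{Thm:CF} together with the depth equality $\depth S/I_1 = \depth S/(I_2, x_n)$; tracking the numerics gives $\depth S/I(P_n^c) = 2$, and an analogous splitting argument handles general $k$. (Alternatively, one computes $\dim S/I(P_n^c)^{[k]}$ directly from the maximal faces of the associated complex and matches it against $n - \pd$ read off from the ${\bf t}$-spread Veronese Betti numbers.)

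The main obstacle I anticipate is the bookkeeping in the second case: correctly pinning down the vector ${\bf t}$ so that ${\bf t}$-spreadness of a degree-$2k$ squarefree monomial is equivalent to its support being a disjoint union of $k$ edges of $P_n^c$, and then verifying that the resulting ideal is genuinely the \emph{principal} Borel (Veronese) ideal rather than merely ${\bf t}$-spread strongly stable — this matters because the depth/Cohen-Macaulay conclusion and the exact value $\depth S/I(P_n^c) = 2$ rely on the precise structure. The linear-resolution part should be comparatively routine once the identification is made, since it follows from vertex splittability; the delicate point is the depth equality feeding into Theorem \ref{Thm:CF}, which is where the induction must be set up carefully.
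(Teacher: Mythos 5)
Your treatment of $K_n$ is correct and coincides with the paper's: $I(K_n)^{[k]}=\m^{[2k]}$ is a squarefree Veronese ideal, hence Cohen--Macaulay with linear resolution, and $\depth S/I(K_n)=1$. The genuine gap is in your description of the matching powers of $P_n^c$. You propose that $\mathcal{G}(I(P_n^c)^{[k]})$ consists of the ${\bf t}$-spread monomials of degree $2k$ with gap vector alternating $2,1,2,1,\dots$; this implicitly assumes that the $k$ matched pairs are formed by \emph{consecutive} elements of the support of the monomial. That is false for $k\ge2$: for instance $x_1x_2x_3x_4\in I(P_4^c)^{[2]}$ via the matching $\{1,3\},\{2,4\}$, yet its gap vector is $(1,1,1)$, violating your condition $t_1=2$. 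The correct statement --- and the one fact in this proposition that actually needs an argument --- is that for $k\ge2$ \emph{every} squarefree monomial of degree $2k$ lies in $I(P_n^c)^{[k]}$, i.e.\ $I(P_n^c)^{[k]}=\m^{[2k]}$: given $x_ix_jx_px_q$ with $1\le i<j<p<q\le n$, the pairs $x_ix_p$ and $x_jx_q$ are both ${\bf 2}$-spread and hence edges of $P_n^c$; the paper then cites \cite[Proposition 1.3]{EHHM2022b} to pass from $k=2$ to all $k\ge2$. So the ``delicate bookkeeping'' you anticipate does not arise: the spread constraint disappears entirely once $k\ge2$, and all higher matching powers are squarefree Veronese ideals, hence bi-CM. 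As written, your program would identify $I(P_n^c)^{[k]}$ for $k\ge2$ with a strictly smaller ideal and prove the statement for the wrong object.

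For $k=1$ your plan is workable but heavier than necessary. The paper observes $I(K_n)=B_{\bf 1}(x_{n-1}x_n)$ and $I(P_n^c)=B_{\bf 2}(x_{n-2}x_n)$ and cites \cite[Theorems 2.2 and 4.3]{CF} for bi-CMness, then gets $\depth S/I(P_n^c)=2$ from \cite[Corollary 5.3]{F1} together with the Auslander--Buchsbaum formula, rather than running an induction through the vertex splitting of Lemma \ref{Lem:I(G)PI(H)} and Theorem \ref{Thm:CF}. Your inductive route could be made to work for $k=1$, but the depth bookkeeping you would have to carry out is exactly what the cited projective-dimension formula already provides.
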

\begin{proof}
	Note that $I(K_n)=B_{{\bf 1}}(x_{n-1}x_n)$ and $I(P_n^c)=B_{{\bf 2}}(x_{n-2}x_n)$, where ${\bf 1}=(1)$ and ${\bf 2}=(2)$. By \cite[Theorems 2.2 and 4.3]{CF}, $I(G)$ is bi-CM for $G\in\{K_n,P_n^c\}$. Let $\m=(x_1,\dots,x_n)$. Obviously $I(K_n)^{[k]}=\m^{[2k]}$ for all $k$, and this ideal is bi-CM. 
	
	Let $G=P_n^c$. In \cite[Example 1.4]{EHHM2022b}, the authors proved that $I(P_n^c)^{[2]}=\m^{[4]}$. Here we recover such a result in a simpler way. Let $x_ix_jx_kx_\ell\in\m^{[4]}$ be a monomial, with $1\le i<j<k<\ell\le n$. Then $x_ix_k,x_jx_\ell\in I(P_n^c)$ because they are ${\bf 2}$-spread monomials. Hence $u\in I(P_n^c)^{[2]}$ and consequently $I(P_n^c)^{[2]}=\m^{[4]}$, as desired. Next, by \cite[Proposition 1.3]{EHHM2022b}, $I(P_n^c)^{[k]}=\m^{[2k]}$ for all $k\ge2$, and such ideal is bi-CM. Finally, $I(G)^{[k]}$ is bi-CM for all $k$, with $G\in\{K_n,P_n^c\}$.
	
	The statement about the depth follows from \cite[Corollary 5.3]{F1} and the Auslander-Buchsbaum formula.
\end{proof}

\section{The classification}\label{sec2}

In this section we state and prove the main result of the paper, that is, the classification of all those graphs $G$ whose matching powers $I(G)^{[k]}$ are bi-CM, for all $1\le k\le\nu(G)$.

First, we note that the only graphs having up to three vertices are $K_2$, $P_3$ and $K_3$. Only $K_2$ and $K_3$ have the property that all their matching powers are bi-CM. Indeed, $P_3$ is not a Cohen-Macaulay graph.

The next definition will be useful in the sequel.
\begin{Definition}
	Let $I$ and $J$  be  two monomial ideals of the polynomial ring $S$. The monomial ideal defined as
	\[I*J=(uv\ :\ u\in\mathcal{G}(I),\,v\in\mathcal{G}(J),\,\supp(u)\cap\supp(v)=\emptyset),\]
	is called the \textit{matching product} of $I$ and $J$.
\end{Definition}

Let ${\bf 1}=(1,\dots,1)\in\ZZ^d_{\ge0}$. 
The next lemma will be crucial for our aim.

\begin{Lemma}\label{Lem:notCM}
	Let $d$ be an integer with $1<d<n$, and let $M_d$ be the set of all squarefree monomials of $S$ of degree $d$. Let $I\subset S$ be a squarefree monomial ideal generated by the set $M_d\setminus\{u\}$, for some $u\in M_d$. Then $I$ is not Cohen-Macaulay. 
\end{Lemma}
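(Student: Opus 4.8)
The plan is to show that $S/I$ fails to be unmixed, which cannot happen for a Cohen--Macaulay quotient of the polynomial ring: if $S/I$ is Cohen--Macaulay, then it has no embedded primes and all of its associated primes are minimal and have height equal to $\height I$. Hence it suffices to exhibit two minimal primes of $I$ of different heights. Write $A=\supp(u)$, so $|A|=d$, and for a subset $F\subseteq[n]$ put $\mathfrak{p}_F=(x_i:i\in[n]\setminus F)$; recall that every minimal prime of the squarefree monomial ideal $I$ is of the form $\mathfrak{p}_F$, and that $\mathfrak{p}_F$ is a minimal prime of $I$ precisely when it is minimal among the monomial primes containing $I$. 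I will check directly that both primes I write down contain $I$ and are minimal with this property.

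First I would verify that $\mathfrak{p}_A$ is a minimal prime of $I$, of height $n-d$. Every generator ${\bf x}_W$ of $I$ satisfies $|W|=d=|A|$ and $W\neq A$, hence $W\not\subseteq A$, so $W\cap([n]\setminus A)\neq\emptyset$ and ${\bf x}_W\in\mathfrak{p}_A$; thus $I\subseteq\mathfrak{p}_A$. If some $\mathfrak{p}_F\subsetneq\mathfrak{p}_A$ contained $I$, then $A\subsetneq F$; choosing $f\in F\setminus A$ and $a\in A$, the set $W=(A\setminus\{a\})\cup\{f\}$ has size $d$, satisfies $W\neq A$ (since $f\notin A$) and $W\subseteq F$, so ${\bf x}_W\in I\setminus\mathfrak{p}_F$, a contradiction. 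Hence $\mathfrak{p}_A$ is minimal over $I$ of height $n-|A|=n-d$.

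Next, using the hypotheses $1<d<n$, I would pick $v\in[n]\setminus A$ and a $(d-2)$-subset $C\subseteq A$, and set $B=C\cup\{v\}$, so $|B|=d-1$. Since every generator ${\bf x}_W$ of $I$ has $|W|=d>d-1=|B|$, we get $W\not\subseteq B$ and hence ${\bf x}_W\in\mathfrak{p}_B$, so $I\subseteq\mathfrak{p}_B$. For minimality, suppose $\mathfrak{p}_F\subsetneq\mathfrak{p}_B$ contained $I$, so that $B\subsetneq F$ and $|F|\ge d$. If $|F|=d$, then $F$ itself is a $d$-subset with $F\neq A$ (because $v\in B\subseteq F$ but $v\notin A$), so ${\bf x}_F\in I\setminus\mathfrak{p}_F$; if $|F|\ge d+1$, then $F$ contains at least two distinct $d$-subsets, at least one of which, say $W$, differs from $A$, so again ${\bf x}_W\in I\setminus\mathfrak{p}_F$ --- a contradiction in either case. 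Thus $\mathfrak{p}_B$ is a minimal prime of $I$ of height $n-|B|=n-d+1$. Since $\mathfrak{p}_A$ and $\mathfrak{p}_B$ are minimal primes of $I$ of different heights, $S/I$ is not unmixed, and therefore $I$ is not Cohen--Macaulay.

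I do not anticipate a genuine obstacle: the argument is a short manipulation of subsets, the only delicate point being the bookkeeping with the hypotheses $d\ge2$ (so that a $(d-2)$-subset $C$ of $A$ exists and a set of size $\ge d+1$ genuinely contains two distinct $d$-subsets) and $d<n$ (so that a vertex $v\notin A$ exists). Equivalently, in simplicial-complex language, the Stanley--Reisner complex of $I$ is the $(d-2)$-skeleton of the simplex on $[n]$ with the single facet $A$ adjoined, and the computation above merely records that this complex is not pure --- it has facets of dimension $d-1$ (namely $A$) and $d-2$ (namely $B$) --- and non-purity already obstructs Cohen--Macaulayness.
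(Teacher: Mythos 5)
Your argument is correct, but it takes a genuinely different route from the paper. The paper's proof is a two-line reduction: after relabeling so that $u=x_{n-d+1}\cdots x_n$, it observes that $I$ is exactly the principal ${\bf 1}$-spread Borel ideal $B_{\bf 1}(v)$ with $v=x_{n-d}(u/x_{n-d+1})$, and then invokes the authors' earlier classification of Cohen--Macaulay principal ${\bf t}$-spread Borel ideals (\cite[Theorem 4.3]{CF}, or \cite[Proposition 1]{CF2024}) to conclude that $I$ is not Cohen--Macaulay. You instead give a direct, self-contained proof that $I$ is not unmixed: you exhibit the minimal prime $\mathfrak{p}_A$ of height $n-d$ (where $A=\supp(u)$) and the minimal prime $\mathfrak{p}_B$ of height $n-d+1$ (where $B$ is a $(d-1)$-set formed from $d-2$ elements of $A$ and one vertex outside $A$), checking containment and minimality by hand; equivalently, you observe that the Stanley--Reisner complex of $I$ is the $(d-2)$-skeleton of the simplex with the single facet $A$ adjoined, which is not pure. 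Your bookkeeping with the hypotheses $1<d<n$ is accurate throughout (both in guaranteeing that $B$ exists and that a set of size $\ge d+1$ contains a $d$-subset other than $A$). What each approach buys: the paper's proof is shorter and slots the lemma into the ${\bf t}$-spread Borel framework already used in Proposition \ref{Prop:KP}, at the cost of relying on the external classification; yours is elementary, requires no outside references, and makes the geometric reason for the failure (non-purity) transparent.
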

\begin{proof}
	After a suitable relabeling of the variables, we may assume that $$u=x_{n-d+1}x_{n-d+2}\cdots x_n.$$ Then, it is immediate to see that $I=B_{\bf1}(v)$, where $v=x_{n-d}(u/x_{n-d+1})$. It follows from \cite[Theorem 4.3]{CF} (or \cite[Proposition 1]{CF2024}) that $I$ is not Cohen-Macaulay.
\end{proof}

\begin{Theorem}\label{thm:main}
	Let $G$ be a graph on the vertex set $[n]$, with $n\ge4$. The following conditions are equivalent.
	\begin{enumerate}
		\item[\textup{(a)}] $I(G)^{[k]}$ is bi-CM for all $1\le k\le\nu(G)$.
		\item[\textup{(b)}] Either $G=K_n$ or $G\cong P_n^c$.
	\end{enumerate}
\end{Theorem}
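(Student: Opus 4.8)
The implication (b) $\Rightarrow$ (a) is exactly Proposition \ref{Prop:KP}, so the work lies in proving (a) $\Rightarrow$ (b). The plan is to assume all matching powers $I(G)^{[k]}$ are bi-CM and, by an inductive argument on $n$, force $G$ to be either $K_n$ or $P_n^c$. The base cases $n=4$ and $n=5$ can be checked directly: among the connected graphs on $n$ non-isolated vertices, one uses Lemma \ref{Lem:res} together with the fact that bi-CM forces a linear resolution (hence, via the Eagon-Reiner criterion, connectedness) to whittle the list down, and then Theorem \ref{Thm:CF} or a direct Macaulay2-style check eliminates the rest. The key structural tools are: (i) since $I(G)=I(G)^{[1]}$ is bi-CM, it has a linear resolution, so by Lemma \ref{Lem:I(G)PI(H)} we have a vertex splitting $I(G)=x_nP+I(H)$ with $H=G\setminus\{n\}$ and $P=(x_j : \{j,n\}\in E(G))$; (ii) by Lemma \ref{Lem:res}, every induced subgraph $H$ of $G$ again satisfies property (a) in degree $k\le\nu(H)$ — at least for the linear-resolution half — and in fact one wants the full bi-CM property to descend to $G\setminus\{v\}$ for a well-chosen vertex $v$, so the induction hypothesis applies to $H$ and gives $H\in\{K_{n-1},P_{n-1}^c\}$.

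From there the strategy is to analyze how $G$ can extend $H$. Suppose first $H=G\setminus\{n\}=K_{n-1}$. Then $G$ is $K_{n-1}$ with one extra vertex $n$ joined to some nonempty subset $N(n)\subseteq[n-1]$ of neighbors. If $|N(n)|=n-1$ then $G=K_n$ and we are done. If $1\le|N(n)|\le n-2$, I want to derive a contradiction by exhibiting a matching power that fails to be Cohen-Macaulay. The natural candidate is the top matching power $I(G)^{[\nu(G)]}$ or a nearly-top one: the generators of $I(G)^{[k]}$ are the monomials ${\bf x}_{V(M)}$, and when $G$ is "$K_{n-1}$ plus a low-degree vertex" these generators are all squarefree monomials of a fixed degree $2k$ \emph{except} for those $2k$-subsets that cannot be realized as the vertex set of a $k$-matching, i.e. subsets meeting $\{n\}$ but whose intersection with $N(n)$ is empty. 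If one can arrange (for a suitable $k$) that exactly one degree-$d$ squarefree monomial is missing from $\G(I(G)^{[k]})$, then Lemma \ref{Lem:notCM} immediately yields non-Cohen-Macaulayness, contradicting bi-CM. The same philosophy handles the case $H=P_{n-1}^c$: here $P_{n-1}^c$ with an added vertex $n$ is forced — using that $I(G)$ must still have a linear resolution and, crucially, that $G$ must have no "forbidden induced subgraph" (the small non-bi-CM graphs ruled out in the base case together with $P_3$) — to be precisely $P_n^c$, and any other neighborhood choice for $n$ produces either a bad induced subgraph or a matching power missing exactly one generator of its degree, again killed by Lemma \ref{Lem:notCM}.

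The step I expect to be the main obstacle is the bookkeeping in the induction step: verifying that for the non-extremal neighborhood configurations of the new vertex $n$ there is genuinely a value of $k$ with $1\le k\le\nu(G)$ for which $\G(I(G)^{[k]})$ equals $M_d\setminus\{u\}$ for a \emph{single} $u$ of degree $d=2k$ — as opposed to missing several monomials, in which case Lemma \ref{Lem:notCM} does not apply verbatim and one needs either a strengthening of that lemma (to "$M_d$ minus the generators of a principal ${\bf 1}$-spread Borel ideal in a few variables is not CM", via \cite[Theorem 4.3]{CF}) or a separate depth computation through Theorem \ref{Thm:CF}. A secondary subtlety is ensuring the induction hypothesis is legitimately available, i.e. that $G\setminus\{n\}$ (or $G\setminus\{v\}$ for the chosen $v$) really inherits property (a) for \emph{all} its matching powers and not merely linearity of resolutions; this follows from combining Lemma \ref{Lem:res} with the Eagon-Reiner equivalence bi-CM $\Leftrightarrow$ CM $+$ linear resolution, plus a depth/localization argument showing Cohen-Macaulayness of $I(H)^{[k]}$ descends from that of $I(G)^{[k]}$ for an induced subgraph obtained by deleting a simplicial or otherwise well-placed vertex. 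Once these two points are settled, the rest of the argument is the routine finite casework sketched above.
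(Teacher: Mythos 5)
Your skeleton (induction on $n$, base case $n=4$, vertex splitting $I(G)=x_nP+I(H)$ from Lemma \ref{Lem:I(G)PI(H)}, then case analysis on $H\in\{K_m,P_m^c\}$) matches the paper's, but the two steps you yourself flag as "obstacles" are exactly where the real content lies, and your sketches of them would not go through as stated. First, the descent of Cohen--Macaulayness from $I(G)^{[k]}$ to $I(H)^{[k]}$ is not a "depth/localization argument": the paper establishes the decomposition $I(G)^{[k]}=x_n(P*I(H)^{[k-1]})+I(H)^{[k]}$, proves that $P*I(H)^{[k-1]}$ has a linear resolution (as the squarefree part of $PI(H)^{k-1}$, using \cite[Corollary 3.1]{FShort} and \cite[Lemma 1.2]{EHHM2022a}), concludes via \cite[Corollary 2.4]{FHT} that this is a Betti splitting, and only then applies Theorem \ref{Thm:CF} to extract Cohen--Macaulayness of $I(H)^{[k]}$. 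Without identifying this mechanism you cannot legitimately invoke the induction hypothesis on $H$. Second, you have no tool to control the neighborhood of the deleted vertex: the paper applies the depth equality of Theorem \ref{Thm:CF} to the splitting $I(G)=x_nP+I(H)$ itself to get $\mu(P)=n-\depth K[x_1,\dots,x_{n-1}]/I(H)$, which pins $\mu(P)$ to a single value and reduces the possibilities to $m\in\{n-1,n-2\}$ with essentially one shape of $P$ in each case. Your "forbidden induced subgraph" heuristic does not replace this, and without it the "routine finite casework" over all nonempty $N(n)\subseteq[n-1]$ is neither finite in any useful sense nor routine.

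Moreover, your hope that every bad configuration is killed by Lemma \ref{Lem:notCM} (one missing generator) is only realized in one subcase (Case 2.1 of the paper, $H=P_{n-1}^c$ with $P=P_i$, $2\le i\le n-2$, where $\mathcal{G}(I(G)^{[2]})=\mathcal{G}(\mathfrak{m}^{[4]})\setminus\{x_{i-1}x_ix_{i+1}x_n\}$). The subcase $H=K_{n-2}$ is excluded by a depth comparison ($\depth S/(x_1,\dots,x_{n-1})^{[3]}=3$ versus $4$, via \cite[Lemma 2]{CF2024} and Theorem \ref{Thm:CF}), and the subcase $H\cong P_{n-2}^c$ requires the top matching power together with \cite[Theorem 1.8(b)]{FM} and \cite[Theorem 2.2(c)]{FM} (forcing $G$ to be a Cohen--Macaulay forest, which is absurd). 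You also overlook that $G\setminus\{n\}$ may have isolated vertices, so the induction hypothesis yields $H=K_m$ or $H\cong P_m^c$ for some $m\le n-1$ rather than $m=n-1$; tracking $m$ is what makes the two cases $m=n-1$ and $m=n-2$ appear at all. As written, the proposal is a correct outline with the two decisive arguments missing.
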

\begin{proof}
	(b) $\Rightarrow$ (a): Follows from Proposition \ref{Prop:KP}.
	
	(a) $\Rightarrow$ (b): We proceed by induction on $n\ge4$. 
	
	Let $n=4$. It is easily checked that the only bi-CM graphs on 4 non-isolated vertices are either complete graphs or isomorphic to complements of a path on four vertices. Indeed, up to isomorphism the following seven graphs are the only graphs on 4 non-isolated vertices\bigskip
	\begin{center}
		\begin{tikzpicture}[scale=0.8]
			\draw[-] (1,0) -- (1,1);
			\draw[-] (2,0) -- (2,1);
			\draw[-] (3.5,0) -- (3.5,1) -- (4.5,1);
			\draw[-] (3.5,1) -- (4.5,0);
			\draw[-] (6,0) -- (6,1) -- (7,1) -- (7,0);
			\draw[-] (8.5,0) -- (8.5,1) -- (9.5,1) -- (9.5,0) -- (8.5,0);
			\draw[-] (11,0) -- (12,1) -- (11,1) -- (11,0);
			\draw[-] (11,1) -- (12,0);
			\draw[-] (13.5,0) -- (13.5,1) -- (14.5,1) -- (14.5,0) -- (13.5,0) -- (14.5,1);
			\draw[-] (16,0) -- (16,1) -- (17,1) -- (17,0) -- (16,0) -- (17,1);
			\draw[-] (16,1) -- (17,0);
			\filldraw[black] (1,0) circle (2pt);
			\filldraw[black] (2,0) circle (2pt);
			\filldraw[black] (1,1) circle (2pt);
			\filldraw[black] (2,1) circle (2pt);
			\filldraw[black] (1.5,-0.2) node[below]{{\tiny$G_1$}};
			\filldraw[black] (3.5,0) circle (2pt);
			\filldraw[black] (4.5,0) circle (2pt);
			\filldraw[black] (3.5,1) circle (2pt);
			\filldraw[black] (4.5,1) circle (2pt);
			\filldraw[black] (4,-0.2) node[below]{{\tiny$G_2$}};
			\filldraw[black] (6,0) circle (2pt);
			\filldraw[black] (7,0) circle (2pt);
			\filldraw[black] (6,1) circle (2pt);
			\filldraw[black] (7,1) circle (2pt);
			\filldraw[black] (6.5,-0.2) node[below]{{\tiny$G_3$}};
			\filldraw[black] (8.5,0) circle (2pt);
			\filldraw[black] (9.5,0) circle (2pt);
			\filldraw[black] (8.5,1) circle (2pt);
			\filldraw[black] (9.5,1) circle (2pt);
			\filldraw[black] (9,-0.2) node[below]{{\tiny$G_4$}};
			\filldraw[black] (11,0) circle (2pt);
			\filldraw[black] (12,0) circle (2pt);
			\filldraw[black] (11,1) circle (2pt);
			\filldraw[black] (12,1) circle (2pt);
			\filldraw[black] (11.5,-0.2) node[below]{{\tiny$G_5$}};
			\filldraw[black] (13.5,0) circle (2pt);
			\filldraw[black] (14.5,0) circle (2pt);
			\filldraw[black] (13.5,1) circle (2pt);
			\filldraw[black] (14.5,1) circle (2pt);
			\filldraw[black] (14,-0.2) node[below]{{\tiny$G_6$}};
			\filldraw[black] (16,0) circle (2pt);
			\filldraw[black] (17,0) circle (2pt);
			\filldraw[black] (16,1) circle (2pt);
			\filldraw[black] (17,1) circle (2pt);
			\filldraw[black] (16.5,-0.2) node[below]{{\tiny$G_7$}};
		\end{tikzpicture}
	\end{center}
	It is easily seen that the only bi-CM graphs among these seven graphs are $G_3$ and $G_7$. In fact, $G_1$ is a Cohen-Macaulay graph which is not connected (and so $I(G_1)$ can not have linear resolution), whereas $G_2, G_4, G_5$ and $G_6$ are not Cohen-Macaulay. On the other hand, $G_3\cong P_4^c$ and $G_7=K_4$ and, by Proposition \ref{Prop:KP}, the statement in the case $n=4$ follows.
	
	Now let $n>4$ and let $G$ be a graph on the vertex set $[n]$ such that $I(G)^{[k]}$ is bi-Cohen-Macaulay for all $1\le k\le\nu(G)$. In particular, $I(G)$ has a linear resolution. Up to a suitable relabeling, by Lemma \ref{Lem:I(G)PI(H)}, 
	$$I(G)=x_nP+I(H),$$ 
	where $H=G\setminus\{n\}$ and $P=(x_j:x_jx_n\in I(G))$ contains $I(H)$. Using the fact that $(x_nP)^{[\ell]}=0$ for $\ell\ge2$, we can note that
	\begin{equation}\label{eq:I(G)BS}
		I(G)^{[k]}\ =\ (x_nP)*I(H)^{[k-1]}+I(H)^{[k]}\ = \ x_n(P*I(H)^{[k-1]})+I(H)^{[k]},
	\end{equation}
	where $*$ is the matching product previously defined.
	
	By \cite[Corollary 3.1]{FShort}, $PI(H)^{k-1}$ has linear resolution. Notice that $P*I(H)^{[k-1]}$ is the squarefree part of $PI(H)^{k-1}$. Hence, by \cite[Lemma 1.2]{EHHM2022a}, $P*I(H)^{[k-1]}$ has also linear resolution. Since both the ideals $(x_nP)*I(H)^{[k-1]}$ and $I(H)^{[k]}$ have linear resolution, and $\mathcal{G}(I(G)^{[k]})=\mathcal{G}((x_nP)*I(H)^{[k-1]})\cup\mathcal{G}(I(H)^{[k]})$, by \cite[Corollary 2.4]{FHT} we have that (\ref{eq:I(G)BS}) is a Betti splitting. Notice that $I(H)^{[k]}\subseteq P*I(H)^{[k-1]}$ and that $n\notin\supp(I(H)^{[k]})$. So, Theorem \ref{Thm:CF} implies that $I(H)^{[k]}$ is Cohen-Macaulay for all $1\le k\le\nu(G)$. By Lemma \ref{Lem:res}, $I(H)^{[k]}$ has linear resolution for all $1\le k\le\nu(H)$. Hence, $I(H)^{[k]}$ is bi-CM for all $k$. By inductive hypothesis and after a suitable relabeling of the vertices, either $H=K_{m}$ or $H\cong P_{m}^c$ for some $m\le n-1$.
	
	Since $I(G)=x_nP+I(H)$ is a Betti splitting and $I(G)$ is Cohen-Macaulay, Theorem \ref{Thm:CF} implies that
	$$
	\depth\frac{S}{P}\ =\ \depth\frac{S}{(I(G),x_n)}\ =\ \depth\frac{K[x_1,\dots,x_{n-1}]}{I(H)}.
	$$
	
	Consequently,
	\begin{equation}\label{eq:muP}
		\mu(P)\ =\ n-\depth\frac{K[x_1,\dots,x_{n-1}]}{I(H)},
	\end{equation}
	where $\mu(P)$ is the cardinality of a minimal system of generators of $P$.
	
	Now, we distinguish the two possible cases, that is, $H=K_{m}$ and $H\cong P_{m}^c$. \smallskip
	\par\noindent
	\textbf{Case 1.} Assume $H=K_{m}$. By Proposition \ref{Prop:KP} we have
	$$
	\depth\frac{K[x_1,\dots,x_{n-1}]}{I(H)}\ =\ \depth\frac{K[x_1,\dots,x_m]}{I(K_m)}+(n-1-m)\ =\ n-m.
	$$
	From equation (\ref{eq:muP}) we obtain $\mu(P)=m$. Since
	$$
	[n]\ =\ V(G)\ =\ V(H)\cup\{n\}\cup\{i\ :\ x_i\in P\}\ =\ [m]\cup\{n\},
	$$
	and $I(H)=I(K_{m})\subset P$, we deduce that, up to a relabeling, $P=(x_1,\dots,x_{m-1},x_p)$ for some $m\le p\le n-1$ and either $m=n-1$ or $m=n-2$. If $m=n-1$ then $P=(x_1,\dots,x_{n-1})$ and $G$ is the complete graph, as desired.
	
	If otherwise $m=n-2$, then we must have $p=n-1$ and $P=(x_1,\dots,x_{n-3},x_{n-1})$. It is immediate to see that
	$$
	I(G)^{[2]}=x_nP*I(K_{n-2})+I(K_{n-2})^{[2]}=x_n(x_1,\dots,x_{n-1})^{[3]}+(x_1,\dots,x_{n-2})^{[4]},
	$$
	and by the argument after equation (\ref{eq:I(G)BS}) this is a Betti splitting. Now, if $n=5$, then $I(G)^{[2]}=x_5(x_1,\dots,x_{4})^{[3]}$ is not Cohen-Macaulay, because it is not unmixed. Otherwise, let $n\ge6$, then $(x_1,\dots,x_{n-2})^{[4]}\ne(0)$. By Theorem \ref{Thm:CF}, since $I(G)^{[2]}$ is Cohen-Macaulay, we should have
	$$
	\depth\frac{S}{(x_1,\dots,x_{n-1})^{[3]}}=\depth\frac{S}{((x_1,\dots,x_{n-2})^{[4]},x_n)}.
	$$
	However, by \cite[Lemma 2]{CF2024} the first depth is equal to 3, while the second depth is equal to 4. Hence, this case does not occur.\smallskip
	\par\noindent
	\textbf{Case 2.} Assume $H\cong P_{m}^c$. By Proposition \ref{Prop:KP}, we have
	$$
	\depth\frac{K[x_1,\dots,x_{n-1}]}{I(H)}\ =\ \depth\frac{K[x_1,\dots,x_m]}{I(P_m^c)}+(n-1-m)\ =\ n-m+1.
	$$
	
	Then, by equation (\ref{eq:muP}) we get that
	\begin{equation}\label{eq:muP1}
		\mu(P)\ =\ m-1.
	\end{equation}
	Since $[n]=V(H)\cup\{i:x_i\in P\}\cup\{n\}$ and $V(H)=[m]$, we have $x_{m+1},\dots,x_{n-1}\in P$. Write $P=(Q,x_{m+1},\dots,x_{n-1})$, where $Q\subseteq(x_1,\dots,x_m)$ is a monomial prime ideal. Since $x_{m+1},\dots,x_{n-1}\notin I(H)$ and $I(H)\subseteq P$, it follows that $I(H)\subseteq Q$. Using again Proposition \ref{Prop:KP} we have $\height\,I(H)=m-2$, and so $\mu(Q)\ge m-2$. Consequently $\mu(P)=\mu(Q)+n-m-1\ge n-3$. Taking into account (\ref{eq:muP1}) we have $m\ge n-2$. Since, by definition, $P\subseteq(x_1,\dots,x_{n-1})$, then we have $m\le n-1$. Hence either $m=n-2$ or $m=n-1$. We distinguish the two following cases.\medskip
	
	\noindent\textbf{Case 2.1.} Let $m=n-1$. Thus $H=P_{n-1}^c$. For $1\le i\le n-1$, let $P_i$ be the monomial prime ideal generated by the set of variables $\{x_1,\dots,x_{n-1}\}\setminus\{x_i\}$. Since $P\subseteq(x_1,\dots,x_{n-1})$ and $\mu(P)=n-2$, we see that $P=P_i$ for some $1\le i\le n-1$. If $P=P_1$ or $P=P_{n-1}$, then $G^c$ is a path, and by Proposition \ref{Prop:KP}, $I(G)^{[k]}$ is indeed Cohen-Macaulay for all $1\le k\le\nu(G)$ and (b) holds in this case.
	
	So, it is enough to show that $P$ can not be equal to $P_i$ for some $2\le i\le n-2$. Suppose that $P=P_i$ for some $2\le i\le n-2$. 
	
	We claim that
	\begin{equation}\label{eq:G-Claim}
		\mathcal{G}(I(G)^{[2]})\ =\ \mathcal{G}(\m^{[4]})\setminus\{x_{i-1}x_ix_{i+1}x_n\},
	\end{equation}
	where $\m=(x_1,\dots,x_n)$. Then Lemma \ref{Lem:notCM} shows that $I(G)^{[2]}$ is not Cohen-Macaulay, which contradicts the assumption.
	
	Let us show that $\mathcal{G}(I(G)^{[2]})=\mathcal{G}(\m^{[4]})\setminus\{x_{i-1}x_ix_{i+1}x_n\}$ if $P=P_i$ for some $2\le i\le n-2$. 
	
	Since $I(G)=I(H)+x_nP$, we have that
	\begin{equation}\label{eq:I(G)[2]}
		\begin{aligned}
			I(G)^{[2]}\ &=\ I(H)^{[2]}+I(H)*(x_nP)+(x_nP)^{[2]}\\
			&=\ (x_1,\dots,x_{n-1})^{[4]}+x_n(P*I(H)),
		\end{aligned}
	\end{equation}
	where we have used the fact that $(x_nP)^{[2]}=(0)$.
	
	By (\ref{eq:I(G)[2]}), all monomials of $\mathcal{G}(\m^{[4]})$ which are not divided by $x_n$ belong to $\mathcal{G}(I(G)^{[2]})$. Let $u=x_{j}x_{k}x_{\ell}x_n$ be a squarefree monomial divided by $x_n$, with $1\le j<k<\ell<n$. Next, we show that $u\in\mathcal{G}(I(G)^{[2]})$ if and only if $u\ne x_{i-1}x_ix_{i+1}x_n$. This will prove equation (\ref{eq:G-Claim}).
	
	If none of the integers $j,k,\ell$ is equal to $i$, then $\{j,\ell\}\in E(G)$ because $\ell\ge j+2$ and $I(H)=I(P_{n-1}^c)\subset I(G)$. Moreover $\{k,n\}\in E(G)$ because $x_k\in P$. Then, we have $u=(x_jx_\ell)(x_kx_n)\in I(G)^{[2]}$, as desired.
	
	Suppose now that one of the integers $j,k,\ell$ is equal to $i$.
	
	If $j=i$ or $\ell=i$, then in both cases $\{j,\ell\}\in E(G)$ because $\ell\ge j+2$. As before, $x_k\in P$ since $k\ne i$ and so $\{k,n\}\in E(G)$. Hence $u=(x_jx_\ell)(x_kx_n)\in I(G)^{[2]}$, once again.
	
	Let $k=i$. Then $1\le j\le i-1$ and $i+1\le\ell\le n-1$.
	
	Suppose that $j<i-1$. Then $\{j,i\}\in E(H)\subset E(G)$ because $i\ge j+2$. Since $\ell\ne i$, we have $x_\ell\in P$. Hence $\{\ell,n\}\in E(G)$, and so $u=(x_jx_i)(x_\ell x_n)\in I(G)^{[2]}$.
	
	Similarly, if $\ell>i+1$, then $\{i,\ell\}\in E(H)\subset E(G)$ because $\ell\ge i+2$. Moreover $x_j\in P$ since $j\ne i$, and so $\{j,n\}\in E(G)$. Hence $u=(x_ix_\ell)(x_jx_n)\in I(G)$.
	
	Finally, assume that $j=i-1$ and $\ell=i+1$. We show that $u\notin I(G)^{[2]}$. Notice that $\{i,n\}\notin E(G)$ since $x_i\notin P$. Hence, $u$ belongs to $I(G)^{[2]}$, if and only if, either $\{i-1,i\},\{i+1,n\}\in E(G)$, or $\{i,i+1\},\{i-1,n\}\in E(G)$. Notice that $i+1\le n-1$ and the restriction of $G$ to the vertex set $[n-1]$ is $H=P_{n-1}^c$. Hence $\{i-1,i\},\{i,i+1\}\notin E(G)$, and so $u\notin I(G)^{[2]}$, as claimed.\medskip
	
	\noindent\textbf{Case 2.2.} Let $m=n-2$. We will show that this case can never occur, and this will conclude the proof.
	
	Let $n$ be odd, say $n=2k+1$ for some $k\ge2$. Since $I(G)^{[\nu(G)]}$ is Cohen-Macaulay by assumption, by \cite[Theorem 1.8(b)]{FM} we have that $\nu(G)=k$ and $I(G)^{[k]}=\m^{[2k]}$. In particular, $u=x_1\cdots x_{2k}\in\mathcal{G}(I(G)^{[k]})$. Notice that
	$$
	I(G)^{[k]}=I(P_{2k-1}^c)^{[k]}+I(P_{2k-1}^c)^{[k-1]}*(x_nP)=I(P_{2k-1}^c)^{[k-1]}*(x_nP),
	$$
	because $\nu(P^c_{2k-1})=k-1$. Hence all minimal monomial generators of $I(G)^{[k]}$ are divided by $x_n$, and so $u\notin\mathcal{G}(I(G)^{[k]})$. A contradiction.
	
	Now, let $n$ be even, say $n=2k$ with $k\ge3$. Then $k-1\ge2$, $\nu(G)=k$ and by the proof of Proposition \ref{Prop:KP} we have
	\begin{align*}
		I(G)^{[k-1]}\ &=\ I(P^c_{2k-2})^{[k-1]}+(x_nP)*I(P^c_{2k-2})^{[k-2]}\\ &=\ (x_1\cdots x_{2k-2})+x_n(P*I(P^c_{2k-2})^{[k-2]}).
	\end{align*}
	Since $\mu(P)=n-3$, we can find $1\le i\le n-2$ with $x_i\notin P$. We claim that $Q=(x_i,x_n)$ is a minimal prime ideal of $I(G)^{[k-1]}$. Indeed, from the above decomposition it is clear that $I(G)^{[k-1]}\subseteq Q$. Notice that $(x_n)$ does not contain $I(G)^{[k-1]}$ because $x_n$ does not divide $x_1\cdots x_{2k-2}$ and $(x_i)$ does not contain $I(G)^{[k-1]}$ because $x_i$ does not divide $x_nx_{n-1}u$ for some $u\in\mathcal{G}(I(P^c_{2k-2})^{[k-2]})$ with $x_i$ not dividing $u$. It is possible to find such a monomial $u$ because $\nu(P^c_{2k-2})=k-1>k-2$. 
	
	Hence $Q\in\Ass\,I(G)^{[k-1]}$ and this implies that $\dim S/I(G)^{[k-1]}=2k-2$. Since $I(G)^{[k]}$ is Cohen-Macaulay by assumption and $n=2k$, then \cite[Theorem 1.8(b)]{FM} implies that $G$ has a perfect matching. Consequently \cite[Theorem 2.2(c)]{FM} implies that $G$ is a Cohen-Macaulay forest. This is easily seen to be impossible. Indeed, for $k\ge4$ we have that $\{1,3,5\}$ is a clique in $H=P_{2k-2}^c$ because $2k-2\ge 6$ since $k\ge4$. So $G$ is not even a forest. Whereas, for $k=3$, we have $H=P^c_4$ and so $I(G)=x_6(x_i,x_j,x_5)+(x_1x_3,x_1x_4,x_2x_4)$ for some integers $1\le i<j\le 4$. It is easily seen that for all possible choices of $i,j$, the graph $G$ contains an induced cycle and so is not even a forest. We reach a contradiction in any case, as desired.
\end{proof}

We expect that any uniform $t$-spread Veronese ideal of degree $d\ge t$ has the property that all its squarefree powers are bi-CM.

\bigskip\bigskip
\noindent\textbf{Acknowledgment.}
A. Ficarra was partly supported by the Grant JDC2023-051705-I funded by
MICIU/AEI/10.13039/501100011033 and by the FSE+. Moreover, both the authors acknowledge support of the GNSAGA of INdAM (Italy).

\end{document}